\documentclass[11pt]{article}

\usepackage{amsmath,amssymb,amsthm}
\usepackage{algorithm,algorithmic}
\usepackage{tikz}
\usepackage{hyperref}
\usepackage{mathtools}
\usepackage{xcolor}
\usepackage{lineno}

\newtheorem{theorem}{Theorem}[section]
\newtheorem{lemma}[theorem]{Lemma}
\newtheorem{proposition}[theorem]{Proposition}
\newtheorem{corollary}[theorem]{Corollary}

\theoremstyle{definition}
\newtheorem{definition}[theorem]{Definition}
\newtheorem{remark}[theorem]{Remark}
\newtheorem{example}[theorem]{Example}

\newcommand{\F}{\mathbb{F}}

\newcommand{\Fcal}{\mathcal{F}}
\newcommand{\Gam}{\Gamma}
\newcommand{\GL}{\mathrm{GL}}
\newcommand{\AGL}{\mathrm{AGL}}
\newcommand{\id}{\mathrm{id}}
\newcommand{\rank}{\operatorname{rank}}

\newcommand{\Hom}{\mathrm{Hom}}
\newcommand{\Stab}{\mathrm{Stab}}

\newcommand{\Fix}{\mathrm{Fix}}
\renewcommand{\Pr}{\mathbb{P}}

\title{Almost All Vectorial Functions Have Trivial Extended-Affine Stabilizers}
\author{Keita Ishizuka\\
Information Technology R\&D Center\\
Mitsubishi Electric Corporation\\
Kanagawa, Japan\\
\texttt{keitaishizuka1994@gmail.com}}
\date{}

\begin{document}
\maketitle

\begin{abstract}
We prove that asymptotically almost all vectorial functions over finite fields have trivial extended-affine stabilizers. As a consequence, the number of EA-equivalence classes is asymptotically equal to the naive estimate, namely the total number of functions divided by the size of the EA-group, with vanishing relative error.
Furthermore, we derive upper bounds on collision probabilities for both extended-affine and CCZ equivalences. For EA-equivalence, we leverage the trivial-stabilizer result to establish a matching lower bound, yielding a tight asymptotic formula that shows two independently sampled functions are EA-equivalent with super-exponentially small probability.
The results validate random sampling strategies for cryptographic primitive design and show that functions with nontrivial EA-stabilizers form an exponentially rare subset.
\end{abstract}

\noindent\textbf{Keywords:} Boolean functions, extended-affine equivalence, EA-stabilizer, group action, orbit counting, asymptotic enumeration

\section{Introduction}

Vectorial Boolean functions play a crucial role in symmetric
cryptography, serving as S-boxes in block ciphers and hash functions. The classification
of these functions is typically studied modulo certain equivalence relations that preserve
important cryptographic properties. Extended-affine (EA) equivalence is one of the most
fundamental such relations: it preserves differential uniformity, the Walsh spectrum, the
nonlinearity, and the algebraic degree. A closely related but strictly coarser notion is
CCZ-equivalence, introduced by Carlet, Charpin, and Zinoviev~\cite{carlet1998}, which
preserves differential uniformity, the Walsh spectrum, and the nonlinearity but does not
in general preserve the algebraic degree~\cite{budaghyan2020}.

Two fundamental questions arise in the study of EA-equivalence:
\begin{enumerate}
\item \emph{Classification:} How many EA-equivalence classes are there? Since EA-equivalent
functions share the same cryptographic properties, counting equivalence classes determines the
number of essentially distinct functions relevant for symmetric cryptography. This is crucial
for understanding the diversity of cryptographic primitives.

\item \emph{Random sampling:} When searching for functions with desirable properties via random generation, is there
a risk of repeatedly sampling EA-equivalent functions? In other words, does random search remain
effective modulo EA-equivalence, or do collision events significantly limit the exploration of
the equivalence class space?
\end{enumerate}

Both questions are intimately connected to the structure of EA-stabilizer groups.
The EA-stabilizer of $F$ is the set of all triples $(A_{\rm in}, A_{\rm out}, L)$
satisfying $A_{\rm out}\circ F\circ A_{\rm in}+L = F$;
it measures how much ``self-equivalence'' the function possesses.
By the orbit-stabilizer theorem, the size of each equivalence class is inversely
proportional to the stabilizer size, which directly impacts both the total number of
classes and the collision probability. However, explicit computation of stabilizers is
infeasible except for very small dimensions, as the function space grows doubly
exponentially with the input dimension.

\subsection{Our Contributions}

The central contribution of this paper is to establish, for vectorial functions over
finite fields, that the EA-group acts \emph{asymptotically freely}: almost every function
has trivial EA-stabilizer. From this single structural statement, both questions above
are answered in tight asymptotic form, and a previously unknown matching lower bound on
the EA-collision probability follows. The contributions are as follows.
\begin{enumerate}
\item \emph{Trivial EA-stabilizers} (Theorem~\ref{thm:trivial-stab}): for a uniformly random
$F\in\Fcal$, $\Pr(\Stab_\Gam(F)\neq\{\id\})=q^{-\Omega(mq^n)}$. The proof is
elementary: it bounds $|\Fix(g)|$ for each nontrivial $g\in\Gam$ by analysing the orbit
structure of affine permutations and applies a union bound. Restricted to ordinary
(non-vectorial) Boolean functions and the action of $\GL(n,2)$, this recovers the
trivial-symmetry theorems of Clausen~\cite{clausen1992} and Clote and
Kranakis~\cite{clote1991}.

\item \emph{Probabilistic proof of the asymptotic EA-orbit count}
(Corollary~\ref{cor:orbit-count}): the number of EA-equivalence classes satisfies
$|\Fcal/\Gam|=(|\Fcal|/|\Gam|)\,(1+o(1))$. The same asymptotic was proved
analytically by Hou~\cite{hou2021} (for $\F_2^n\to\F_2$) and by
Lu et al.~\cite{lu2023} (for vectorial Boolean functions) using
Burnside's lemma combined with compound-matrix rank estimates and M\"obius inversion.
Our argument follows a fundamentally different route: the asymptotic is a one-line
consequence of Theorem~\ref{thm:trivial-stab} via the orbit-stabilizer theorem,
exposing the structural reason behind the naive estimate
(trivial stabilizers with high probability) in place of analytic group-character
computations.

\item \emph{Tight asymptotic collision probability} (Proposition~\ref{cor:collision-upper}
and Corollary~\ref{cor:collision-ea-asymptotic}): we give upper bounds on the
collision probability for both EA- and CCZ-equivalence which follow from
orbit-stabilizer alone.
Furthermore, by using Theorem~\ref{thm:trivial-stab}, we prove a
\emph{matching lower bound} for EA-equivalence, yielding
$\Pr(F\sim_{\mathrm{EA}} G)=(|\Gam|/|\Fcal|)(1+o(1))$. This sharp two-sided
asymptotic for the EA-collision probability does not follow from
Hou~\cite{hou2021}, Lu et al.~\cite{lu2023}, or earlier work, and is genuinely new.
\end{enumerate}

\subsection{Related Work}
We organize prior work into three categories:

\emph{(i) Trivial-symmetry theorems for Boolean functions.}
Clausen~\cite{clausen1992} proved that almost all Boolean functions $\F_2^n\to\F_2$ have
trivial $\GL(n,2)$-stabilizer; Clote and Kranakis~\cite{clote1991} obtained an
analogous statement for invariance under coordinate permutations. Both results are
restricted to the codomain $\F_2$ and to a substantially smaller group than $\Gam$.
Theorem~\ref{thm:trivial-stab} extends this line to arbitrary vectorial codomains
$\F_q^m$ and to the full EA-group, including the linear summand $L\in\Hom(U,W)$.

\emph{(ii) Counting EA-classes.}
Hou~\cite{hou2021} obtained an asymptotic formula for the number of EA-equivalence
classes of Boolean functions using Burnside's lemma and compound-matrix rank estimates.
Lu et al.~\cite{lu2023} extended this to vectorial Boolean functions via M\"obius
inversion combined with Hou's matrix-theoretic apparatus. Our
Corollary~\ref{cor:orbit-count} matches these asymptotics for vectorial Boolean
functions and extends them to arbitrary $q$, with a shorter proof that does not require
analytic estimates on compound matrices.

\emph{(iii) Structure of EA- and CCZ-equivalence.}
A complementary line of work studies the relationship between EA- and CCZ-equivalence
and develops algorithms or invariants for distinguishing classes:
Canteaut and Perrin~\cite{canteaut2019} showed that CCZ-equivalence decomposes as
EA-equivalence followed by an operation called \emph{twisting};
Budaghyan, Calderini, and Villa~\cite{budaghyan2020} gave sufficient conditions for
CCZ-equivalence to be strictly more general than EA-equivalence combined with inverses;
Kaleyski~\cite{kaleyski2021} surveyed EA/CCZ invariants for APN and AB functions; and
Canteaut, Couvreur, and Perrin~\cite{canteaut2022} gave an efficient algorithm for
recovering an EA-transformation between two quadratic functions.
These works address \emph{testing} EA-equivalence between explicit pairs of functions
and are largely orthogonal to the present asymptotic-enumeration question.

\subsection{Paper Organization}

Section~\ref{sec:prelim} reviews necessary
background on group actions, Boolean functions, and EA-equivalence. Section~\ref{sec:main} presents
our main results, including the probabilistic analysis of EA-stabilizers and the asymptotic
count of EA-classes. Section~\ref{sec:conclusion} concludes with a discussion of implications
and open problems.

\section{Preliminaries}\label{sec:prelim}

\subsection{Group Actions and Probabilistic Tools}

Let $q$ be a prime power and let $\F_q$ denote the finite field with $q$ elements.
Let $U=\F_q^n$ and $W=\F_q^m$ denote finite vector spaces over $\F_q$.
The \emph{general linear group} $\GL(n,q)$ consists of all invertible $n\times n$
matrices over $\F_q$, and has order
\[
   |\GL(n,q)| = \prod_{i=0}^{n-1}(q^n-q^i).
\]

\begin{definition}[Affine general linear group]
The \emph{affine general linear group} $\AGL(U)$ is the semidirect product
$U\rtimes\GL(n,q)$, acting on $U$ by $\sigma(x)=Px+a$ with $P\in\GL(n,q)$ and $a\in U$.
Its order is $|\AGL(U)|=q^n\cdot|\GL(n,q)|$.
\end{definition}

\begin{definition}[Orbits and stabilizers]
Let $G$ be a group acting on a set $X$. For $x\in X$, the \emph{$G$-orbit} of $x$ is
\[
   G\cdot x := \{g\cdot x : g\in G\}.
\]
The \emph{stabilizer} of $x$ is
\[
   \Stab_G(x):=\{g\in G: g\cdot x = x\}.
\]
For each $g\in G$, the \emph{fixed-point set} is
\[
   \Fix(g):=\{x\in X: g\cdot x=x\}.
\]
\end{definition}

\begin{lemma}[Orbit-stabilizer theorem]
For any $x\in X$, the size of the orbit $G\cdot x$ satisfies
\[
   |G\cdot x| = \frac{|G|}{|\Stab_G(x)|}.
\]
\end{lemma}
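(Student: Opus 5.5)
The orbit-stabilizer theorem follows by building an explicit bijection between the orbit $G\cdot x$ and the set of left cosets $G/\Stab_G(x)$, and then applying Lagrange's theorem. We first check that $H:=\Stab_G(x)$ is a subgroup of $G$. It contains the identity because $e\cdot x=x$. It is closed under products, since $g\cdot x=x$ and $h\cdot x=x$ give $(gh)\cdot x=g\cdot(h\cdot x)=x$. It is closed under inverses, since $g\cdot x=x$ gives $g^{-1}\cdot x=g^{-1}\cdot(g\cdot x)=x$. All of this uses only the action axioms.

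Next we define $\phi\colon G/H\to G\cdot x$ by $\phi(gH)=g\cdot x$. The main point is that $\phi$ is well defined, and this is also the step that yields injectivity. For $g,g'\in G$ we have the chain of equivalences
\[
 g\cdot x=g'\cdot x \iff g^{-1}g'\cdot x = x \iff g^{-1}g'\in H \iff gH=g'H .
\]
Reading the chain from right to left shows that $\phi$ does not depend on the choice of coset representative. Reading it from left to right shows that $\phi$ is injective. Surjectivity holds by the definition of the orbit, since every element of $G\cdot x$ has the form $g\cdot x=\phi(gH)$.

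The bijection gives $|G\cdot x|=[G:H]$. In the paper $G$ is finite (it is the EA-group $\Gam$ or one of its subgroups), so Lagrange's theorem applies: $G$ is the disjoint union of its left cosets of $H$, and each coset has exactly $|H|$ elements via $h\mapsto gh$. Hence $[G:H]=|G|/|H|$, which is the claimed identity $|G\cdot x|=|G|/|\Stab_G(x)|$.

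No step here is a real obstacle; the only care needed is the coset-equality equivalence above, which handles well-definedness and injectivity together.
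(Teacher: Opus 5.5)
Your proof is correct. The paper states the orbit-stabilizer theorem as a background fact without proof, and your argument (the bijection $gH \mapsto g\cdot x$ from cosets of $\Stab_G(x)$ onto $G\cdot x$, followed by Lagrange's theorem for finite $G$, which the appearance of $|G|$ in the statement already presupposes) is the standard textbook argument the paper relies on implicitly.
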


\begin{lemma}[Burnside's lemma]
The number of $G$-orbits is given by
\[
   |X/G| = \frac{1}{|G|}\sum_{g\in G}|\Fix(g)|.
\]
\end{lemma}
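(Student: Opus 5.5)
The statement to be proved is Burnside's lemma: $|X/G| = \frac{1}{|G|}\sum_{g\in G}|\Fix(g)|$. I will prove it by double counting the set of fixed pairs
\[
   \mathcal{P} := \{(g,x)\in G\times X : g\cdot x = x\}.
\]
Throughout I assume, as the paper does, that $G$ and $X$ are finite.

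First, I count $\mathcal{P}$ by fixing the group element. For each $g$, the admissible $x$ are exactly the points of $\Fix(g)$. Hence
\[
   |\mathcal{P}| = \sum_{g\in G}|\Fix(g)|.
\]

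Second, I count $\mathcal{P}$ by fixing the point. For each $x$, the admissible $g$ are exactly the elements of $\Stab_G(x)$. Hence
\[
   |\mathcal{P}| = \sum_{x\in X}|\Stab_G(x)|.
\]
By the orbit-stabilizer theorem (the preceding lemma), $|\Stab_G(x)| = |G|/|G\cdot x|$. Substituting gives
\[
   |\mathcal{P}| = |G|\sum_{x\in X}\frac{1}{|G\cdot x|}.
\]

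Third, I evaluate this last sum orbit by orbit. The orbits partition $X$, since they are the equivalence classes of the relation $x\sim y \iff y\in G\cdot x$. This relation is an equivalence relation because $G$ contains the identity, inverses, and is closed under composition of the action. Within a single orbit $\mathcal{O}$, every $x\in\mathcal{O}$ satisfies $G\cdot x=\mathcal{O}$, so that orbit contributes
\[
   \sum_{x\in\mathcal{O}} \frac{1}{|\mathcal{O}|} = 1.
\]
Summing over all orbits therefore gives $\sum_{x\in X} 1/|G\cdot x| = |X/G|$.

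Equating the two counts of $|\mathcal{P}|$ and dividing by $|G|$ yields the formula. The only step requiring any care is the third: one must justify that the orbits partition $X$ and that $G\cdot x$ depends only on the orbit containing $x$. Everything else is a direct application of the orbit-stabilizer lemma already stated.
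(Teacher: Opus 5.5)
Your double-counting proof via the orbit-stabilizer theorem is correct and is the standard argument. The paper gives no proof of its own: it quotes Burnside's lemma, like the orbit-stabilizer theorem, as a classical preliminary, and applies it only to finite $G$ and $X$, which matches your finiteness assumption.
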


We use standard asymptotic notation: for functions $f,g:\mathbb{N}\to\mathbb{R}_{>0}$,
we write $f(n)=O(g(n))$ if there exist constants $C>0$ and $n_0$ such that
$f(n)\le C\cdot g(n)$ for all $n\ge n_0$. We write $f(n)=\Omega(g(n))$ if
$g(n)=O(f(n))$, and $f(n)=o(g(n))$ if $\lim_{n\to\infty}f(n)/g(n)=0$.

\begin{lemma}[Union bound]
If $E_1,\dots,E_k$ are events, then
\[
   \Pr\Bigl(\bigcup_{i=1}^k E_i\Bigr) \le \sum_{i=1}^k \Pr(E_i).
\]
\end{lemma}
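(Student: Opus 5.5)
The plan is to prove the union bound by comparing indicator functions pointwise and then integrating. In every use in this paper the underlying probability space is finite (a uniformly random $F\in\Fcal$, or a pair of such functions). Even so, the argument only needs linearity and monotonicity of expectation, so I will write it for a general probability measure $\Pr$.

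The key step is the pointwise inequality
\[
   \mathbf{1}_{E_1\cup\cdots\cup E_k}(\omega)\;\le\;\sum_{i=1}^k \mathbf{1}_{E_i}(\omega)
\]
for every outcome $\omega$. To check it, suppose first that $\omega$ lies in no $E_i$. Then both sides are $0$. Otherwise $\omega$ lies in at least one $E_i$. The left side is then $1$, and the right side is the number of indices $i$ with $\omega\in E_i$, which is at least $1$.

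Taking expectations of both sides, monotonicity of expectation preserves the inequality. Linearity turns the right-hand side into $\sum_i \mathbb{E}[\mathbf{1}_{E_i}]=\sum_i\Pr(E_i)$, while the left-hand side is exactly $\Pr\bigl(\bigcup_i E_i\bigr)$. This gives the claim.

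As an alternative route I could run an induction on $k$. The case $k=1$ is trivial, and the inductive step uses
\[
   \Pr(A\cup B)=\Pr(A)+\Pr(B)-\Pr(A\cap B)\le\Pr(A)+\Pr(B)
\]
with $A=E_1\cup\cdots\cup E_{k-1}$ and $B=E_k$. The identity itself follows from additivity over the disjoint decomposition $A\cup B=A\sqcup(B\setminus A)$, together with $\Pr(B\setminus A)\le\Pr(B)$.

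There is no real obstacle here. The only point needing care is that the events need not be disjoint, and both routes handle this: the overlap is simply overcounted on the right-hand side, which only helps the inequality.
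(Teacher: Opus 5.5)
Your proof is correct, and there is no competing argument to compare it with. The paper does not prove this lemma; it lists the union bound among the standard probabilistic tools in the preliminaries. It then uses it only for the uniform measure on the finite set $\Fcal$ in the proof of Theorem~\ref{thm:trivial-stab}, where it amounts to $\bigl|\bigcup_{g\neq\id}\Fix(g)\bigr|\le\sum_{g\neq\id}|\Fix(g)|$.

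Your indicator-function argument, the pointwise bound $\mathbf{1}_{\bigcup_i E_i}\le\sum_i\mathbf{1}_{E_i}$ followed by monotonicity and linearity of expectation, is the standard complete proof. It also covers arbitrary probability measures, not only the finite uniform one the paper needs.

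One small wording slip in your inductive alternative concerns where the inclusion--exclusion identity comes from. It follows from combining $\Pr(A\cup B)=\Pr(A)+\Pr(B\setminus A)$ with $\Pr(B)=\Pr(B\setminus A)+\Pr(A\cap B)$. The inequality $\Pr(B\setminus A)\le\Pr(B)$ is not part of the identity. It gives $\Pr(A\cup B)\le\Pr(A)+\Pr(B)$ directly, so you can skip the identity altogether.
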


\subsection{Vectorial Functions and Equivalence Relations}

\begin{definition}[Vectorial functions]
We write $\Fcal=\{F:U\to W\}$ for the set of all functions from $U$ to $W$.
Note that $|\Fcal|=q^{mq^n}$.
\end{definition}

When $q=2$, functions in $\Fcal$ are called \emph{vectorial Boolean functions}.
They play a central role in symmetric cryptography, where they are used as
S-boxes in block ciphers and hash functions.
The cryptographic quality of a vectorial function is measured by several properties.
\emph{Differential uniformity} quantifies resistance to differential cryptanalysis and is
defined as the maximum number of solutions to $F(x+a) - F(x) = b$ over all nonzero $a$ and all
$b$. The \emph{Walsh spectrum} and \emph{nonlinearity} measure resistance to linear cryptanalysis,
while the \emph{algebraic degree} controls the complexity of algebraic attacks. Functions
optimizing these properties, such as almost perfect nonlinear (APN) and almost bent (AB)
functions, are of particular interest. EA-equivalence preserves all of the above
properties, while CCZ-equivalence preserves differential uniformity, the Walsh spectrum,
and nonlinearity, but does \emph{not} in general preserve the algebraic
degree~\cite{budaghyan2020}. For further details on these properties and their
cryptanalytic significance, we refer to~\cite{carlet2010}.

\begin{definition}[Extended-affine equivalence]\label{def:EA}
The \emph{extended-affine (EA) group}
\[
   \Gam := \AGL(U)\times \AGL(W)\times \Hom(U,W)
\]
acts on $\Fcal$ as follows: for $g=(A_{\rm in},A_{\rm out},L)\in \Gam$ with
$A_{\rm in}\in\AGL(U)$, $A_{\rm out}\in\AGL(W)$, and $L\in\Hom(U,W)$,
\[
   (g\cdot F)(x)
   := A_{\rm out}\bigl(F(A_{\rm in}(x))\bigr)+L(x).
\]
Explicitly, if $A_{\rm in}(x)=Px+a$ and $A_{\rm out}(y)=Qy+b$, then
\[
   \bigl((P,a,Q,b,L)\cdot F\bigr)(x) = Q\,F(Px+a)+Lx+b.
\]
Two functions $F,G\in\Fcal$ are \emph{EA-equivalent}, denoted by $F \simeq_{\mathrm{EA}} G$, if they lie in the same
$\Gam$-orbit. The \emph{EA-stabilizer} of $F$ is $\Stab_\Gam(F)$.
The order of the EA group is
\[
   |\Gam|
   = |\AGL(U)|\cdot|\AGL(W)|\cdot q^{nm}
   = q^{n^2+m^2+nm+O(n+m)}.
\]
\end{definition}

The linear summand $L\in\Hom(U,W)$ in Definition~\ref{def:EA} is essential:
without it the action reduces to the (strictly smaller) action of
$\AGL(U)\times\AGL(W)$ by pre- and post-composition, sometimes called
\emph{affine equivalence}~\cite{carlet2010}.

\begin{definition}[Graph of a function]
For a function $F: U \to W$, the \emph{graph} of $F$ is the subset
\[
   \mathrm{Gr}(F) := \{(x,F(x)) : x \in U\} \subseteq U \times W = \F_q^{n+m}.
\]
\end{definition}

\begin{definition}[CCZ-equivalence]
Two functions $F,G: U \to W$ are \emph{CCZ-equivalent} (Carlet-Charpin-Zinoviev equivalent)
if their graphs $\mathrm{Gr}(F)$ and $\mathrm{Gr}(G)$ are affine equivalent as subsets of $U \times W$.
That is, $F$ and $G$ are CCZ-equivalent, denoted by $F \simeq_{\mathrm{CCZ}} G$, if there exists
$\Phi \in \AGL(U \times W)$ such that $\Phi(\mathrm{Gr}(F)) = \mathrm{Gr}(G)$.
\end{definition}

CCZ-equivalence is strictly more general than EA-equivalence: every pair of EA-equivalent
functions is also CCZ-equivalent.

\section{Main Results}\label{sec:main}

\subsection{Fixed-Point and Orbit Structure of Affine Permutations}

Before analysing EA-stabilizers, we establish key properties of affine permutations
over $\F_q$ that will control the orbit structure.

\begin{lemma}[Fixed-point structure]\label{lem:affine-fixed}
Let $\sigma(x)=Px+a$ be an affine permutation of $U=\F_q^n$ with $(P,a)\neq(I,0)$.
Then the fixed-point set
\[
   \{x\in U:\sigma(x)=x\}
\]
is either empty or an affine subspace of dimension at most $n-1$.
In particular, $|\{x\in U:\sigma(x)=x\}|\le q^{n-1}$.
\end{lemma}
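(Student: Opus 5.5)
The fixed-point set of $\sigma$ is exactly the solution set of the linear system $(P-I)x=-a$ over $\F_q$, so the plan is to apply the structure theorem for solution sets of linear systems. Such a set is either empty (when $-a\notin\operatorname{im}(P-I)$), or a coset $x_0+\ker(P-I)$ of the subspace $\ker(P-I)$, where $x_0$ is any particular solution. In the nonempty case it is an affine subspace of dimension $\dim\ker(P-I)=n-\rank(P-I)$.

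It remains to show that this dimension is at most $n-1$ whenever $(P,a)\neq(I,0)$. I will split into two cases.

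\emph{Case $P\neq I$.} Then $P-I$ is a nonzero matrix, so $\rank(P-I)\ge 1$ and $\dim\ker(P-I)\le n-1$.

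\emph{Case $P=I$.} Then $a\neq 0$ and the equation reads $0=-a$, which has no solution, so the fixed-point set is empty.

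In both cases the fixed-point set is either empty or an affine subspace of dimension at most $n-1$. Since an affine subspace of dimension $d$ over $\F_q$ has exactly $q^d$ elements, the bound $|\{x:\sigma(x)=x\}|\le q^{n-1}$ follows, and it also holds trivially when the set is empty.

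I do not expect a genuine obstacle. The only point needing care is to remember the case $P=I$, $a\neq0$: there the kernel of $P-I$ is all of $U$, so the dimension argument alone would give the useless bound $q^n$, and it is the emptiness of the solution set that rescues the claim.
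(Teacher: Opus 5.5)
Your proof is correct and follows the paper's argument exactly: you rewrite the fixed-point condition as $(P-I)x=-a$, use $\rank(P-I)\ge 1$ when $P\neq I$, and observe that there are no solutions when $P=I$, $a\neq 0$. Your remark that an affine subspace of dimension $d$ has $q^d$ points makes explicit the cardinality bound that the paper leaves implicit.
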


\begin{proof}
The fixed-point equation $Px+a=x$ is equivalent to $(P-I)x=-a$.
Since this is a system of linear equations, the solution set (if nonempty) is an
affine subspace. If $P\neq I$, then $\rank(P-I)\ge 1$, so the solution space
has dimension at most $n-1$. If $P=I$ and $a\neq 0$ (pure translation), there are
no fixed points.
\end{proof}

\begin{lemma}[Orbit count]\label{lem:orbit-count}
Let $\sigma$ be a permutation of a finite set $X$ of size $N$.
A point $x\in X$ is a \emph{fixed point of $\sigma$} if $\sigma(x)=x$, and an
\emph{orbit of $\sigma$} is an orbit of the cyclic group $\langle\sigma\rangle$ acting
on $X$. Let $f$ denote the number of fixed points of $\sigma$ and $s$ the number of its
orbits. Then
\[
   s\le \frac{N+f}{2}.
\]
In particular, if $\sigma(x)=Px+a$ is a nontrivial affine permutation of $U=\F_q^n$,
then the number of orbits satisfies
\[
   s\le \frac{q^n+q^{n-1}}{2} = \frac{q+1}{2q}\cdot q^n.
\]
\end{lemma}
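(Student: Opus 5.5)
The plan is a direct counting argument on the cycle decomposition of $\sigma$. First, partition $X$ into the orbits of $\langle\sigma\rangle$. Exactly $f$ of these orbits are singletons, namely the fixed points. The remaining $s-f$ orbits each contain at least two points, because a point that is not fixed has $\sigma(x)\neq x$ in its orbit. Summing orbit sizes then gives
\[
   N \;=\; \sum_{\text{orbits } O}|O| \;\ge\; f + 2(s-f) \;=\; 2s-f,
\]
and rearranging yields $s\le (N+f)/2$. This part is purely combinatorial and needs nothing beyond the definitions.

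For the affine specialization, take $X=U$ and $N=q^n$, and let $\sigma(x)=Px+a$ with $(P,a)\neq(I,0)$. Lemma~\ref{lem:affine-fixed} gives $f\le q^{n-1}$. Since the bound $(N+f)/2$ is monotone increasing in $f$, substituting gives
\[
   s\le \frac{q^n+q^{n-1}}{2}=\frac{q+1}{2q}\,q^n .
\]

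No step is a real obstacle. The only point needing care is the justification that non-singleton orbits have size at least $2$, together with the correct use of monotonicity in $f$ when substituting the upper bound from Lemma~\ref{lem:affine-fixed}. One should also note that "nontrivial" here means $(P,a)\neq(I,0)$, so that Lemma~\ref{lem:affine-fixed} applies. This includes pure translations, where $f=0$ and the bound holds trivially.
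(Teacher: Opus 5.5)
Your proof is correct and is essentially the paper's argument. The paper writes $s\le f+(N-f)/2$ directly, which is your $N\ge 2s-f$ rearranged, and then substitutes $f\le q^{n-1}$ from Lemma~\ref{lem:affine-fixed} exactly as you do.
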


\begin{proof}
The $f$ fixed points each form singleton orbits. The remaining $N-f$ non-fixed points
lie in orbits of size at least $2$, so they contribute at most $(N-f)/2$ orbits.
Hence
\[
   s\le f + \frac{N-f}{2} = \frac{N+f}{2}.
\]
For a nontrivial affine permutation, Lemma~\ref{lem:affine-fixed} gives $f\le q^{n-1}$,
so
\[
   s \le \frac{q^n+q^{n-1}}{2}
   = \frac{q^{n-1}(q+1)}{2}
   = \frac{q+1}{2q}\cdot q^n.
\]
\end{proof}

\subsection{Probabilistic Analysis of EA-Stabilizers}

We now analyse the stabilizer distribution and deduce
precise asymptotics for the number of EA-classes.

\begin{lemma}[Fixed-point bound for EA-group elements]\label{lem:fix-g}
Let $g=(P,a,Q,b,L)\in \Gam$ be nontrivial, where $P\in\GL(n,q)$, $a\in U$,
$Q\in\GL(m,q)$, $b\in W$, and $L\in\Hom(U,W)$.
Then the number of $F\in\Fcal$ satisfying $g\cdot F = F$ is bounded by
\[
   |\Fix(g)| \;\le\; q^{cmq^n}
\]
for some constant $c<1$ independent of $n,m$.
\end{lemma}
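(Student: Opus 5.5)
The plan is to split according to whether the input component $A_{\rm in}(x)=Px+a$ is trivial, and in each case to count solutions of the functional equation $Q\,F(Px+a)+Lx+b=F(x)$ directly. Note that $F$ is fixed by $g$ exactly when this identity holds for every $x\in U$.

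\emph{Case 1: $(P,a)\neq(I,0)$.} Let $\sigma(x)=Px+a$. The identity gives $F(x)=Q\,F(\sigma(x))+Lx+b$. Along any orbit $x,\sigma(x),\sigma^2(x),\dots$ of $\langle\sigma\rangle$, the value $F(\sigma(x))$ therefore determines $F(x)$, since the map $y\mapsto Qy+Lx+b$ is a bijection of $W$. Iterating around the orbit, the value of $F$ at a single chosen representative determines $F$ on the whole orbit. (The cycle-closing consistency condition can only reduce the count further.) Hence $|\Fix(g)|\le (q^m)^{s}$, where $s$ is the number of orbits of $\sigma$. Lemma~\ref{lem:orbit-count} gives $s\le \frac{q+1}{2q}q^n$, so
\[
|\Fix(g)|\le q^{\frac{q+1}{2q}mq^n}.
\]
This case therefore holds with $c_1=\frac{q+1}{2q}\le \frac34<1$, uniformly in $n$ and $m$.

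\emph{Case 2: $(P,a)=(I,0)$.} The condition becomes $(I-Q)F(x)=Lx+b$ for all $x$, so the values $F(x)$ at different points are now decoupled.
\begin{itemize}
\item If $Q=I$, the condition reads $Lx+b=0$ for all $x$. This forces $b=0$ and $L=0$, making $g$ trivial, which is excluded. So in this subcase $\Fix(g)=\emptyset$.
\item If $Q\neq I$, then for each $x$ the value $F(x)$ must lie in a coset of $\ker(I-Q)$ (or there is no solution at all). This kernel has dimension at most $m-1$ because $\rank(I-Q)\ge1$, which is the same linear-algebra step as in Lemma~\ref{lem:affine-fixed}.
\end{itemize}
Consequently $|\Fix(g)|\le q^{(m-1)q^n}=q^{(1-1/m)mq^n}$.

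Combining the two cases gives the statement with
\[
c=\max\Bigl(\tfrac{q+1}{2q},\,1-\tfrac1m\Bigr)<1 .
\]
The main obstacle is Case 2, where the bound $1-1/m$ is sharp: take $Q$ a transvection, $L=0$, $b=0$. Here $c$ is independent of $n$, and it is a constant independent of $m$ only when $m$ is held bounded, which I take to be the regime in which the lemma's constant is meant. In that regime $c$ is a genuine constant below $1$, e.g.\ $c=\max(3/4,\,1-1/m_0)$ for $m\le m_0$. The downstream union bound only needs the deficit $mq^n-\log_q|\Fix(g)|\ge \min\bigl(\tfrac{q-1}{2q}m,\,1\bigr)q^n$. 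This deficit is $\Omega(q^n)$ and still dominates $\log_q|\Gam|=O(n^2+m^2+nm)$ whenever $m$ is subexponential in $n$. I would record this explicitly so that the constant in Case 2 is not misread as uniform in unbounded $m$.
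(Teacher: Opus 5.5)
Your proof is correct and is essentially the paper's own argument: the same split on whether $(P,a)=(I,0)$, one free value in $W$ per $\sigma$-orbit combined with Lemma~\ref{lem:orbit-count} in Case~1, and the coset-of-$\ker(I-Q)$ bound in Case~2, yielding the same $c=\max\{(q+1)/(2q),\,1-1/m\}$. Your caveat is also right: the transvection example ($L=0$, $b=0$) gives $|\Fix(g)|=q^{(m-1)q^n}$ exactly, so no constant $c<1$ uniform in $m$ exists, and the paper's proof arrives at this $m$-dependent $c$ without comment; your deficit form $\min\bigl(\tfrac{q-1}{2q}m,1\bigr)q^n$ is the correct uniform statement to carry downstream.
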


\begin{proof}
The fixed-point condition $g\cdot F=F$ reads
\begin{equation}\label{eq:fixed-eq}
   Q\,F(Px+a) + Lx + b = F(x)
   \quad(\forall x\in U).
\end{equation}
We distinguish two cases.

\medskip
\noindent\textbf{Case~1: $(P,a)\neq(I,0)$.}
Let $\sigma(x):=Px+a$; by assumption $\sigma\neq\id$.
Lemma~\ref{lem:orbit-count} shows that $\sigma$ has at most
\[
   s\le\frac{q+1}{2q}\cdot q^n
\]
orbits.
Decompose $U$ into $\sigma$-orbits:
\[
   U = \bigsqcup_{i=1}^s O_i,
   \qquad O_i = \{x_i,\sigma(x_i),\sigma^2(x_i),\dots,\sigma^{\ell_i-1}(x_i)\},
\]
where $\ell_i\ge 1$ is the length of orbit $O_i$.

Fix an orbit $O_i$ of length $\ell_i$.
Iterating~\eqref{eq:fixed-eq} starting from $x_i$ gives
\begin{align*}
   F(x_i)
      &= Q\,F(\sigma(x_i)) + Lx_i + b \\
      &= Q^2 F(\sigma^2(x_i)) + Q\bigl(L\sigma(x_i)+b\bigr) + Lx_i + b \\
      &\;\;\vdots \\
      &= Q^{\ell_i}F(x_i) + \sum_{j=0}^{\ell_i-1}Q^j\bigl(L\sigma^j(x_i)+b\bigr).
\end{align*}
Setting $r_i:=\sum_{j=0}^{\ell_i-1}Q^j\bigl(L\sigma^j(x_i)+b\bigr)\in W$ (which is
determined once $g$ and the orbit representative $x_i$ are fixed), this implies
\begin{equation}\label{eq:orbit-constraint}
   \bigl(I - Q^{\ell_i}\bigr) F(x_i) = r_i.
\end{equation}
The number of solutions $F(x_i)\in W$ to~\eqref{eq:orbit-constraint} is at most
$q^{\dim\ker(I-Q^{\ell_i})}\le q^m$.
Once $F(x_i)$ is chosen, the values $F(\sigma^k(x_i))$ for $1\le k<\ell_i$ are
uniquely determined by~\eqref{eq:fixed-eq}.

Repeating this argument for each orbit, we see that $F$ is determined by
choosing at most one free parameter (of size $\le q^m$) per orbit.
Since $s\le (q+1)/(2q)\cdot q^n$, the total number of admissible $F$ is at most
\[
   |\Fix(g)| \le (q^m)^s \;\le\; q^{m\cdot\frac{q+1}{2q}\cdot q^n}
   = q^{\frac{q+1}{2q}mq^n}.
\]
Hence $c=(q+1)/(2q)$ suffices in this case.

\medskip
\noindent\textbf{Case~2: $(P,a)=(I,0)$ but $(Q,b,L)\neq(I,0,0)$.}
Now~\eqref{eq:fixed-eq} becomes
\[
   Q\,F(x) + Lx + b = F(x)
   \quad\Longleftrightarrow\quad
   (I-Q)F(x) = Lx + b
   \qquad(\forall x\in U).
\]
If $Q\neq I$, then $\rank(I-Q)\ge 1$, so $\dim\ker(I-Q)\le m-1$, and for each
$x$ the equation $(I-Q)F(x)=Lx+b$ has at most $q^{m-1}$ solutions. Hence
\[
   |\Fix(g)| \;\le\; (q^{m-1})^{q^n} = q^{(m-1)q^n}.
\]
If $Q=I$, then the equation reduces to $Lx+b=0$ for all $x\in U$, which forces
$L=0$ and $b=0$, contradicting nontriviality. So no $F$ satisfies the equation,
and $|\Fix(g)|=0$.
In either subcase, $c = 1 - 1/m$ works when $m\ge 1$.

Combining both cases, we may take $c=\max\bigl\{(q+1)/(2q),1-1/m\bigr\}<1$.
\end{proof}

\begin{theorem}[Almost all functions have trivial stabilizers]\label{thm:trivial-stab}
For a uniformly random $F\in\Fcal$,
\[
  \Pr\bigl(\Stab_\Gam(F)\neq\{\id\}\bigr)
  \;\le\;
  |\Gam|\cdot q^{-(1-c)mq^n}
  \;=\;
  q^{-\Omega(mq^n)}.
\]
\end{theorem}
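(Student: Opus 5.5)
The plan is a union bound over the nontrivial elements of $\Gam$, with each term controlled by Lemma~\ref{lem:fix-g}. The event $\Stab_\Gam(F)\neq\{\id\}$ is the union, over all $g\in\Gam\setminus\{\id\}$, of the events $E_g=\{g\cdot F=F\}$, i.e.\ $F\in\Fix(g)$. Since $F$ is uniform on $\Fcal$ and $|\Fcal|=q^{mq^n}$, we have $\Pr(E_g)=|\Fix(g)|/q^{mq^n}\le q^{cmq^n}/q^{mq^n}=q^{-(1-c)mq^n}$. Summing over at most $|\Gam|$ elements gives the first inequality
\[
\Pr\bigl(\Stab_\Gam(F)\neq\{\id\}\bigr)\le |\Gam|\cdot q^{-(1-c)mq^n}.
\]

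For the asymptotic form, insert $|\Gam|=q^{n^2+m^2+nm+O(n+m)}$ from Definition~\ref{def:EA}. This gives the exponent
\[
-(1-c)mq^n+n^2+m^2+nm+O(n+m).
\]
We need the negative term to dominate. When $m$ is bounded relative to $q^n$, for instance $m=O(n)$ or more generally $m=o(q^n)$, the quantity $n^2+m^2+nm$ is $o(mq^n)$. Indeed, $n^2/(mq^n)\to0$, $m^2/(mq^n)=m/q^n\to0$, and $nm/(mq^n)=n/q^n\to0$. So, provided $1-c$ is bounded below by a positive constant, the exponent is $-\Omega(mq^n)$.

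The main obstacle is exactly this uniformity of $c$. Lemma~\ref{lem:fix-g} gives $c=\max\{(q+1)/(2q),1-1/m\}$. The term $1-1/m$ tends to $1$ as $m$ grows, so $1-c\ge 1/m$ is not a constant. To handle this, I would revisit Case~2 of Lemma~\ref{lem:fix-g} rather than use its stated constant. If $Q\neq I$, the condition $(I-Q)F(x)=Lx+b$ leaves at most $q^{m-1}$ choices per $x$, so $|\Fix(g)|\le q^{(m-1)q^n}$. The number of such $g$ (with $P=I$, $a=0$) is at most $|\AGL(W)|q^{nm}=q^{m^2+nm+O(m)}$. Their total contribution is therefore
\[
q^{m^2+nm+O(m)-q^n},
\]
which is still small when $m^2+nm=o(q^n)$. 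Case~1 elements keep the genuine constant $c_1=(q+1)/(2q)<1$ and contribute $|\Gam|q^{-(1-c_1)mq^n}$. Splitting the union bound into these two families therefore gives a bound of the form $q^{-\Omega(q^n)}$ in general, and $q^{-\Omega(mq^n)}$ whenever $m$ stays bounded. I would state the theorem's $\Omega(mq^n)$ claim under the regime, implicit in the paper, that $q$ and the ratio governing $c$ are fixed, e.g.\ $m$ fixed or $c$ treated as an absolute constant as Lemma~\ref{lem:fix-g} asserts. I would remark that the first inequality holds unconditionally.
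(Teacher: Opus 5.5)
Your union-bound derivation of $\Pr(\Stab_\Gam(F)\neq\{\id\})\le|\Gam|\,q^{-(1-c)mq^n}$ is exactly the paper's proof. You depart from the paper only at the final asymptotic step, and there you are more careful than it is. The paper writes $|\Gam|=q^{O((n+m)^2)}$ and says the result follows, tacitly treating $c$ as an absolute constant, as the statement of Lemma~\ref{lem:fix-g} claims. But that lemma's proof only gives $c=\max\{c_1,1-1/m\}$ with $c_1=(q+1)/(2q)$, i.e.\ $1-c=\min\{1-c_1,1/m\}$. Your restriction is not an artifact of the method. For $m\ge2$, take $g=(I,0,Q,0,0)$ with $Q$ a transvection. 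Then $\Fix(g)$ consists exactly of the $F$ with values in the hyperplane $\ker(I-Q)$, so $\Pr(\Stab_\Gam(F)\neq\{\id\})\ge q^{-q^n}$, which is not $q^{-\Omega(mq^n)}$ once $m\to\infty$. Adding this one-line example would justify your caveat. It also shows that the paper's choice $c=3/4$ for $q=2$, $m\ge4$ is a slip, and that the bound $2^{-304}$ in Example~\ref{ex:concrete} is false, since the true probability there is at least $2^{-256}$. Two small remarks. First, fixing $q$ is unnecessary: $c_1\le 3/4$ for every $q$, so bounded $m$ alone gives $1-c\ge\min\{1/4,1/m\}$. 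Second, your split of the union bound is correct but buys little. Since $(1-c)mq^n=\min\{(1-c_1)m,1\}\,q^n\ge(1-c_1)q^n$, the unsplit bound is already $q^{-\Omega(q^n)}$ whenever $n^2+m^2+nm=o(q^n)$. The split only removes a factor $|\AGL(U)|=q^{n^2+O(n)}$ from the Case~2 terms.
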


\begin{proof}
We endow $\Fcal$ with the uniform probability measure $\Pr(\,\cdot\,)=|\,\cdot\,|/|\Fcal|$.
Observe that $\Stab_\Gam(F)\neq\{\id\}$ if and only if there exists a nontrivial
$g\in\Gam$ with $g\cdot F=F$. Hence
\[
   \{F\in\Fcal:\Stab_\Gam(F)\neq\{\id\}\}
   = \bigcup_{g\in\Gam\setminus\{\id\}} \Fix(g),
\]
and by Lemma~\ref{lem:fix-g} each summand has probability
$\Pr(\Fix(g))=|\Fix(g)|/|\Fcal|\le q^{-(1-c)mq^n}$.
Applying the union bound over the $|\Gam|-1$ nontrivial elements of $\Gam$,
\begin{align*}
   \Pr\bigl(\Stab_\Gam(F)\neq\{\id\}\bigr)
   &\le \sum_{g\in\Gam\setminus\{\id\}} \Pr(\Fix(g)) \\
   &\le (|\Gam|-1)\cdot q^{-(1-c)mq^n}
   \le |\Gam|\cdot q^{-(1-c)mq^n}.
\end{align*}
To complete the proof, we bound $|\Gam|$. Recall that
\[
   |\GL(n,q)| = \prod_{i=0}^{n-1}(q^n-q^i) \le (q^n)^n = q^{n^2},
\]
so $|\AGL(U)| = q^n\cdot|\GL(n,q)| \le q^{n^2+n} = q^{O(n^2)}$.
Similarly, $|\AGL(W)| = q^{O(m^2)}$, and $|\Hom(U,W)|=q^{nm}$. Hence
\[
   |\Gam| = |\AGL(U)|\cdot|\AGL(W)|\cdot|\Hom(U,W)|
   \le q^{O(n^2+m^2+nm)} = q^{O((n+m)^2)},
\]
and the result follows.
\end{proof}

In the binary case ($q=2$), Lemma~\ref{lem:orbit-count} gives $(q+1)/(2q)=3/4$,
so for $m\ge 4$ we have $c=\max\{3/4,1-1/m\}=3/4$. The probability bound becomes
\[
   \Pr\bigl(\Stab_\Gam(F)\neq\{\id\}\bigr)
   \;\le\;
   2^{O((n+m)^2)} \cdot 2^{-(1/4)m2^n}
   \;=\;
   2^{-\Omega(m2^n)}.
\]
This probability decays super-exponentially in $n$, demonstrating that
vectorial Boolean functions with nontrivial EA-stabilizers are exponentially
rare. In other words, cryptographic S-boxes generically possess trivial EA-stabilizers:
the overwhelming majority of functions in $\Fcal$ have trivial stabilizer groups,
and only an exponentially negligible fraction admit any nontrivial element in their
EA-stabilizer.

\begin{example}[Concrete bounds for cryptographic parameters]\label{ex:concrete}
We evaluate the bound from Theorem~\ref{thm:trivial-stab} for parameters
commonly used in symmetric cryptography, i.e., $8$-bit S-boxes($n=m=8$):
We have
$\log_2|\Gam| \le 2(8^2+8) + 64 = 208$,
and the exponent in the decay is
$(1-c)m 2^n = \frac{1}{4}\cdot 8\cdot 256 = 512$.
Hence
\[
   \Pr\bigl(\Stab_\Gam(F)\neq\{\id\}\bigr)
   \;\le\; 2^{208-512} = 2^{-304}.
\]
This shows that a uniformly random $8$-bit S-box has a nontrivial
EA-stabilizer with probability at most $2^{-304}$, an astronomically
small quantity
Thus, for dimensions $n\ge 8$ (the regime relevant to modern block ciphers such as AES),
the trivial-stabilizer property holds with overwhelming probability.
\end{example}

\begin{remark}
Our upper bounds on $|\Fix(g)|$ and the resulting estimate
$\Pr[\Stab(F)\ne\{\id\}]\le q^{-\Omega(m q^n)}$
are obtained via worst-case analysis.
In principle, one could refine the argument by exploiting the
cycle structure of affine permutations on $U$ and $W$ and by computing
the rank distribution of $Q-I$ for $Q\in\GL(W)$.
Such refinements would slightly improve the constants in the exponent,
but would not affect the asymptotic type of the decay, and would require
a substantial amount of additional case analysis.
For the purposes of establishing asymptotic freeness, the present bounds
are entirely sufficient.
\end{remark}

\begin{corollary}[Asymptotic count of EA-classes]\label{cor:orbit-count}
Denote the set of EA-classes by $\Fcal/\Gam = \{\, \Gam\cdot F : F\in\Fcal \,\}$.
Then
\[
   \frac{|\Fcal/\Gam|}{|\Fcal|/|\Gam|}
   = 1 + o(1).
\]
In other words, the relative error between the true number of EA-classes
and the naive estimate $|\Fcal|/|\Gam|$ is $o(1)$.
\end{corollary}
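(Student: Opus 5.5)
We prove the two inequalities $|\Fcal/\Gam|\ge |\Fcal|/|\Gam|$ and $|\Fcal/\Gam|\le (|\Fcal|/|\Gam|)(1+o(1))$ separately, using the orbit-stabilizer theorem and Theorem~\ref{thm:trivial-stab}. Throughout, $o(1)$ refers to $n\to\infty$ (or $q^n\to\infty$). The constant $c$ of Lemma~\ref{lem:fix-g} is assumed bounded away from $1$; for $m$ bounded this holds, and otherwise the $(q+1)/(2q)$ term dominates once $m$ is large enough.

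For the lower bound, we use that every orbit has size at most $|\Gam|$, by the orbit-stabilizer theorem. Since the orbits partition $\Fcal$, we get $|\Fcal|\le |\Fcal/\Gam|\cdot|\Gam|$, which is the claimed inequality.

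For the upper bound, we split the orbits into two kinds.
\begin{itemize}
\item \emph{Free orbits} consist of functions with trivial stabilizer. Each has size exactly $|\Gam|$, again by orbit-stabilizer.
\item \emph{Non-free orbits} consist of functions with nontrivial stabilizer. Stabilizers of points in one orbit are conjugate, so this property depends only on the orbit.
\end{itemize}
Let $B=\{F:\Stab_\Gam(F)\ne\{\id\}\}$. By Theorem~\ref{thm:trivial-stab}, $|B|\le |\Fcal|\,|\Gam|\,q^{-(1-c)mq^n}$.

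The free orbits partition $\Fcal\setminus B$, so there are at most $|\Fcal|/|\Gam|$ of them. Each non-free orbit contains at least one element of $B$, so there are at most $|B|$ of them. Hence
\[
|\Fcal/\Gam|\le \frac{|\Fcal|}{|\Gam|}+|B|\le \frac{|\Fcal|}{|\Gam|}\Bigl(1+|\Gam|^2 q^{-(1-c)mq^n}\Bigr).
\]

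The main step is to check that the error term $|\Gam|^2q^{-(1-c)mq^n}$ tends to $0$. From the proof of Theorem~\ref{thm:trivial-stab}, $|\Gam|^2=q^{O((n+m)^2)}$. This is $q^{o(mq^n)}$ because $(n+m)^2/(mq^n)\to 0$: indeed $(n+m)^2\le 2n^2+2m^2$, and both $n^2/(mq^n)$ and $m/q^n$ tend to $0$ when $m$ grows at most polynomially relative to $q^n$.

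This is where care is needed: if $m$ were allowed to be enormous relative to $q^n$, the $m^2$ term in $\log_q|\Gam|$ could beat $mq^n$. We would therefore state the regime explicitly, assuming for instance $m=o(q^n)$ (which covers the cryptographic case $m\le n$) with $n\to\infty$. In that regime the error term is $q^{-\Omega(mq^n)}=o(1)$, and combining with the lower bound gives the corollary.
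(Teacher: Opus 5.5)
Your core argument is correct, but it takes a different route from the paper, and one claim in your regime discussion is false.

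\textbf{Comparison of routes.} The paper applies Burnside's lemma directly. The identity contributes $|\Fcal|/|\Gam|$, nonnegativity of the other terms gives the lower bound, and each nontrivial $g$ is bounded by Lemma~\ref{lem:fix-g}. The trivial-stabilizer statement itself is never used; only the group-order estimate from its proof is. You instead use orbit--stabilizer together with Theorem~\ref{thm:trivial-stab}, sandwiching $|\Fcal/\Gam|$ between the number of free orbits and that number plus $|B|$. This is the ``one-line consequence via orbit--stabilizer'' the introduction advertises, and it exposes the structural reason. It also works verbatim for any finite group action whose non-free locus is small. Burnside is slightly sharper if done carefully. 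The nontrivial terms sum to at most $(|\Gam|-1)q^{cmq^n}$, so after dividing by $|\Gam|$ the relative error is $|\Gam|\,q^{-(1-c)mq^n}$. That is a factor $|\Gam|$ better than the $|\Gam|^2q^{-(1-c)mq^n}$ you obtain by counting non-free orbits crudely by $|B|$, and which the paper obtains by writing $O(|\Gam|q^{cmq^n})$.

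\textbf{The false step.} You say that for large $m$ the term $(q+1)/(2q)$ dominates in $c=\max\{(q+1)/(2q),\,1-1/m\}$. It is the other way round: $1-1/m$ dominates once $m\ge 2q/(q-1)$. So $c\to 1$ and $(1-c)mq^n=q^n$, not $\Omega(mq^n)$.

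This is not an artefact of the lemma's proof. Take $g=(I,0,Q,0,0)$ with $Q$ a transvection. Then $\Fix(g)$ is exactly the set of $F$ with values in the hyperplane $\ker(I-Q)$, so $|\Fix(g)|=q^{(m-1)q^n}$. Hence no $c<1$ uniform in $m$ exists; the lemma's ``independent of $n,m$'' holds only for bounded $m$. The discussion right after Theorem~\ref{thm:trivial-stab}, with its ``$m\ge 4$'', makes the same slip.

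Consequently your error term is really $|\Gam|^2q^{-q^n}$, with $\log_q|\Gam|=n^2+m^2+nm+O(n+m)$. This tends to $0$ when $m=o(q^{n/2})$, but it is larger than $1$ once $m\ge q^{n/2}$. So the regime $m=o(q^n)$ you announce is not covered by this argument, nor by the paper's.

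For fixed $m$, or $m\le n$ (the case you actually care about), everything goes through. There $(1-c)mq^n\ge q^n/4$, since $(q-1)/(2q)\ge 1/4$, and $|\Gam|^2=q^{O(n^2)}$. You should state the hypothesis as $m=o(q^{n/2})$. To go beyond that, you would need to stratify the nontrivial $g$ by $\rank(I-Q)$ rather than use a uniform $c$.
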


\begin{proof}
Burnside's lemma gives
\[
   |\Fcal/\Gam|
   = \frac{1}{|\Gam|}\sum_{g\in \Gam}|\Fix(g)|.
\]
The identity contributes $|\Fcal|$.
Each nontrivial $g$ contributes at most $q^{c mq^n}$ by Lemma~\ref{lem:fix-g}.
Hence
\[
   |\Fcal/\Gam|
   = \frac{|\Fcal|}{|\Gam|}
     + O\!\left( |\Gam|\cdot q^{c mq^n}\right).
\]
The relative error is
\[
   \frac{|\Gam|\cdot q^{c mq^n}}{|\Fcal|/|\Gam|}
   = \frac{|\Gam|^2\cdot q^{c mq^n}}{|\Fcal|}
   = q^{-(1-c)mq^n + O((n+m)^2)}
   = o(1).
\]
To justify the exponent, recall from the proof of Theorem~\ref{thm:trivial-stab}
that $|\Gam| = q^{O((n+m)^2)}$, hence $|\Gam|^2 = q^{O((n+m)^2)}$.
Thus the relative error tends to zero.
\end{proof}

For the binary case ($q=2$), we have
\[
   \frac{|\Fcal/\Gam|\cdot |\Gam|}{2^{m2^n}}
   = 1 + o(1),
\]
where $|\Gam|=|\AGL(n,2)|\cdot|\AGL(m,2)|\cdot 2^{nm}$.
Consequently, the contribution of functions
with nontrivial EA-stabilizers to the total count is exponentially negligible:
their number is exponentially smaller than the full space.
This observation is particularly relevant for structured searches in the design
of cryptographic S-boxes.

\begin{proposition}[Upper bounds on collision probabilities]\label{cor:collision-upper}
For two independently and uniformly random functions $F,G\in\Fcal$, the probability
that they are equivalent satisfies:
\begin{enumerate}
\item \emph{EA-equivalence:}
\[
\Pr(F\sim_{\mathrm{EA}} G)
\;\le\;
\frac{|\Gam|}{|\Fcal|}
\;=\;
q^{-mq^n + O((n+m)^2)}
\;=\;
q^{-\Omega(q^n)}.
\]
\item \emph{CCZ-equivalence:}
\[
\Pr(F\sim_{\mathrm{CCZ}} G)
\;\le\;
\frac{|\AGL(U \times W)|}{|\Fcal|}
\;=\;
q^{-mq^n + O((n+m)^2)}
\;=\;
q^{-\Omega(q^n)}.
\]
\end{enumerate}
\end{proposition}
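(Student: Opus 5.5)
We condition on $F$ and bound the probability that $G$ falls into the equivalence class of $F$. Since $G$ is uniform and independent of $F$,
\[
   \Pr(F\sim G) \;=\; \mathbb{E}_F\!\left[\frac{|[F]|}{|\Fcal|}\right],
\]
where $[F]$ denotes the EA- or CCZ-class of $F$. Each class will be bounded uniformly in $F$ by the order of the relevant group.

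\textbf{EA-equivalence.} The EA-class of $F$ is exactly the orbit $\Gam\cdot F$. By the orbit-stabilizer theorem,
\[
   |\Gam\cdot F| \;=\; \frac{|\Gam|}{|\Stab_\Gam(F)|} \;\le\; |\Gam|,
\]
so the conditional probability is at most $|\Gam|/|\Fcal|$, uniformly in $F$. From Definition~\ref{def:EA} we have $|\Gam|=q^{O((n+m)^2)}$ (as in the proof of Theorem~\ref{thm:trivial-stab}) and $|\Fcal|=q^{mq^n}$. Together these give the exponent $-mq^n+O((n+m)^2)$.

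\textbf{CCZ-equivalence.} The CCZ-class of $F$ is not a group orbit on $\Fcal$, because $\Phi(\mathrm{Gr}(F))$ need not be the graph of a function. So instead we use an injection. Each $G\sim_{\mathrm{CCZ}}F$ is determined by its graph $\mathrm{Gr}(G)$, since a function is recovered from its graph. Also $\mathrm{Gr}(G)=\Phi(\mathrm{Gr}(F))$ for some $\Phi\in\AGL(U\times W)$. The map $\Phi\mapsto\Phi(\mathrm{Gr}(F))$ therefore surjects from a subset of $\AGL(U\times W)$ onto the CCZ-class of $F$. Hence
\[
   |[F]_{\mathrm{CCZ}}| \;\le\; |\AGL(U\times W)|.
\]
We then bound the group order:
\[
   |\AGL(U\times W)| \;=\; q^{n+m}\,|\GL(n+m,q)| \;\le\; q^{(n+m)^2+(n+m)} \;=\; q^{O((n+m)^2)}.
\]
This yields the same exponent as in the EA case.

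\textbf{Final equality $q^{-mq^n+O((n+m)^2)}=q^{-\Omega(q^n)}$.} Here one must check that $mq^n$ dominates $(n+m)^2$. This requires a growth assumption relating $m$ and $n$, for instance $m$ fixed or $m=O(q^{n/2})$, or more generally $m=o(q^n)$; this is implicit in the paper's asymptotic regime. Under it, $(n+m)^2=o(mq^n)$, since $n^2=o(q^n)$ and $m^2=o(mq^n)$. The exponent is then at most $-\tfrac12 mq^n\le-\tfrac12 q^n$ for large $n$, using $m\ge1$.

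The main obstacle is only this bookkeeping in the last step. The CCZ case also needs the small care that the class is not an orbit, which the injectivity of $G\mapsto\mathrm{Gr}(G)$ handles.
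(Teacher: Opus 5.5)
Your proof is correct and is essentially the paper's argument: conditioning on $F$ and averaging $|[F]|/|\Fcal|$ is the same computation as the paper's $\sum_i(|O_i|/|\Fcal|)^2\le\max_j|O_j|/|\Fcal|$, with each class size bounded by the order of the relevant group. You are also more careful than the paper in two places it glosses over: you bound the CCZ-classes through the surjection from a subset of $\AGL(U\times W)$, since they are not orbits of a group action on $\Fcal$, and you make explicit the growth assumption on $m$ (e.g.\ $m=o(q^n)$) that the final equality with $q^{-\Omega(q^n)}$ needs.
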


\begin{proof}
For both equivalence relations, the collision probability can be written as
\[
\Pr(F \sim G)
= \sum_{i=1}^r \Bigl( \frac{|O_i|}{|\Fcal|} \Bigr)^2
\le \sum_{i=1}^r \frac{|O_i|}{|\Fcal|} \cdot \frac{\max_j |O_j|}{|\Fcal|}
= \frac{\max_j |O_j|}{|\Fcal|},
\]
where $O_1,\dots,O_r$ are the orbits. By the orbit-stabilizer theorem, the maximum
orbit size is bounded by the group order. For EA-equivalence,
$\max_j |O_j| \le |\Gam|=q^{O((n+m)^2)}$, and for CCZ-equivalence,
$\max_j |O_j| \le |\AGL(U \times W)|=q^{(n+m)^2+O(n+m)}$. Combined with
$|\Fcal| = q^{mq^n}$, this gives the stated bounds.
\end{proof}

We emphasise that these upper bounds rely only on the orbit-stabilizer theorem and
the elementary estimates on the group orders; they do \emph{not} require the
trivial-stabilizer result of Theorem~\ref{thm:trivial-stab}.

The upper bounds in Proposition~\ref{cor:collision-upper} follow directly from the
orbit-stabilizer theorem and hold for both EA- and CCZ-equivalence over arbitrary finite
fields. However, using our result on trivial stabilizers (Theorem~\ref{thm:trivial-stab}),
we can show that the EA upper bound is asymptotically tight:

\begin{corollary}[Asymptotic collision probability for EA-equivalence]\label{cor:collision-ea-asymptotic}
For two independently and uniformly random functions $F,G\in\Fcal$,
the probability that $F$ and $G$ are EA-equivalent satisfies
\[
\Pr(F\sim_{\mathrm{EA}} G)
= \frac{|\Gam|}{|\Fcal|}\,(1+o(1)).
\]
\end{corollary}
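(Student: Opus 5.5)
The upper bound $\Pr(F\sim_{\mathrm{EA}}G)\le|\Gam|/|\Fcal|$ is already given by Proposition~\ref{cor:collision-upper}, so it remains to prove a matching lower bound of the form $(|\Gam|/|\Fcal|)(1-o(1))$. I would start from the same orbit decomposition used in that proof. Let $O_1,\dots,O_r$ be the $\Gam$-orbits on $\Fcal$. Then
\[
\Pr(F\sim_{\mathrm{EA}}G)=\sum_{i=1}^r\Bigl(\frac{|O_i|}{|\Fcal|}\Bigr)^2 .
\]
Call an orbit \emph{free} if its points have trivial stabilizer; all points of an orbit have conjugate stabilizers, so this is well defined. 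By the orbit-stabilizer theorem every free orbit has size exactly $|\Gam|$. Discarding the non-free orbits can only decrease the sum, so
\[
\Pr(F\sim_{\mathrm{EA}}G)\;\ge\;\sum_{O_i\ \text{free}}\frac{|O_i|\cdot|\Gam|}{|\Fcal|^2}
=\frac{|\Gam|}{|\Fcal|}\cdot\frac{|\{F:\Stab_\Gam(F)=\{\id\}\}|}{|\Fcal|}.
\]

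Equivalently, the last factor is $\Pr(\Stab_\Gam(F)=\{\id\})$ for uniform $F$. Theorem~\ref{thm:trivial-stab} gives
\[
\Pr(\Stab_\Gam(F)=\{\id\})\ge 1-|\Gam|\,q^{-(1-c)mq^n}.
\]
Since $|\Gam|=q^{O((n+m)^2)}$, as shown in the proof of Theorem~\ref{thm:trivial-stab}, the error term is $q^{-(1-c)mq^n+O((n+m)^2)}$. This tends to $0$ whenever $(1-c)mq^n$ dominates $(n+m)^2$, which is the same regime already used in Corollary~\ref{cor:orbit-count}. Combining the lower bound with the upper bound then gives
\[
\frac{|\Gam|}{|\Fcal|}\bigl(1-o(1)\bigr)\le\Pr(F\sim_{\mathrm{EA}}G)\le\frac{|\Gam|}{|\Fcal|},
\]
which is the claim.

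The main obstacle is not the combinatorics, which is a single line once the free orbits are isolated. The delicate point is the asymptotic regime in which the $o(1)$ term actually vanishes. If $c=\max\{(q+1)/(2q),\,1-1/m\}$, then $1-c$ can be as small as $1/m$, so the decay exponent is $(1-c)mq^n\ge\min\{(q-1)mq^n/(2q),\,q^n\}$, which is at least of order $q^n$. I would state explicitly that this must dominate $(n+m)^2$, for instance as $n\to\infty$ with $m$ at most polynomial in $q^{n/2}$, or under whatever implicit convention Corollary~\ref{cor:orbit-count} uses, and then check that the error is $o(1)$ under that convention.
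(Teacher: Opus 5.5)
Your proposal is correct and follows essentially the same route as the paper. Both arguments keep only the trivial-stabilizer orbits, each of size exactly $|\Gam|$, to get the lower bound $\frac{|\Gam|}{|\Fcal|}\Pr(\Stab_\Gam(F)=\{\id\})$, invoke Theorem~\ref{thm:trivial-stab}, and close with the upper bound from Proposition~\ref{cor:collision-upper}. Your explicit factor in place of the paper's $1-\varepsilon$ is a correct refinement, and so is your remark that the error $|\Gam|q^{-(1-c)mq^n}$ vanishes only when $\min\{(q-1)mq^n/(2q),\,q^n\}$ dominates $(n+m)^2$, a regime the paper's ``sufficiently large $n$'' leaves implicit.
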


\begin{proof}
Let $O_1,\dots,O_r$ be the EA-orbits. The collision probability is
\begin{equation}\label{eq:collision-ea}
\Pr(F\sim_{\mathrm{EA}} G)
= \sum_{i=1}^r \Bigl( \frac{|O_i|}{|\Fcal|} \Bigr)^2
= \frac{1}{|\Fcal|^2}\sum_{i=1}^r |O_i|^2.
\end{equation}

Let $\mathcal O_{\mathrm{triv}}$ be the set of orbits with trivial EA-stabilizer
and $\mathcal O_{\mathrm{nt}}$ the set of the remaining orbits.
By Theorem~\ref{thm:trivial-stab}, for every $\varepsilon>0$ and all sufficiently large $n$,
\begin{equation}\label{eq:measure-trivial}
|\{F\in\Fcal: \Stab_\Gam(F)=\{\id\}\}|
\ge (1-\varepsilon)|\Fcal|.
\end{equation}
Hence
\[
\sum_{O_i\in \mathcal O_{\mathrm{triv}}} |O_i|
= |\{F:\Stab_\Gam(F)=\{\id\}\}|
\ge (1-\varepsilon)|\Fcal|.
\]
Since each such orbit has size $|\Gam|$, this implies
\[
|\mathcal O_{\mathrm{triv}}|
\ge \frac{(1-\varepsilon)|\Fcal|}{|\Gam|}.
\]

The contribution of the trivial-stabilizer orbits to \eqref{eq:collision-ea} is therefore
\[
\frac{1}{|\Fcal|^2}
\sum_{O_i\in\mathcal O_{\mathrm{triv}}} |O_i|^2
=
\frac{|\mathcal O_{\mathrm{triv}}|\cdot |\Gam|^2}{|\Fcal|^2}
\ge (1-\varepsilon)\frac{|\Gam|}{|\Fcal|}.
\]

For the nontrivial-stabilizer orbits, since
$|O_i|\le |\Gam|$ and
$\sum_{O_i\in\mathcal O_{\mathrm{nt}}} |O_i| \le \varepsilon |\Fcal|$
by \eqref{eq:measure-trivial}, we have
\[
\frac{1}{|\Fcal|^2}\sum_{O_i\in\mathcal O_{\mathrm{nt}}} |O_i|^2
\le
\varepsilon\frac{|\Gam|}{|\Fcal|}.
\]

Combining both contributions with the upper bound from Proposition~\ref{cor:collision-upper} yields
\[
(1-\varepsilon)\frac{|\Gam|}{|\Fcal|}
\;\le\;
\Pr(F\sim_{\mathrm{EA}} G)
\;\le\;
\frac{|\Gam|}{|\Fcal|}.
\]
Since $\varepsilon>0$ is arbitrary, the result follows.
\end{proof}

This asymptotic equality validates random sampling strategies for
searching cryptographic primitives. For two independently sampled functions,
the probability of landing in the same EA-equivalence class is negligibly
small. In the binary case, the collision probability is $2^{-\Omega(2^n)}$,
which means that random generation of vectorial Boolean functions produces
EA-inequivalent candidates with overwhelming probability. This is particularly
useful for exhaustive searches of functions with special cryptographic
properties such as almost perfect nonlinear (APN) and almost bent (AB)
functions: researchers can sample the function space randomly without concern
for redundancy modulo EA-equivalence, as collisions are exponentially rare.

\section{Conclusion}\label{sec:conclusion}

We have established that asymptotically almost all vectorial functions over finite fields
have trivial EA-stabilizers. This fundamental result resolves the two questions posed in the
introduction.

\begin{enumerate}
\item \emph{Classification:} The number of EA-equivalence classes is asymptotically
equal to the naive estimate (the total number of functions divided by the size of the
EA-group) with vanishing relative error. This shows that the EA group action is asymptotically
free: almost every function has a trivial EA-stabilizer. For cryptographic applications, this
means that the vast majority of vectorial functions lie in distinct EA-classes, and functions
with nontrivial EA-stabilizers form an exponentially small subset.

\item \emph{Random sampling:} Together with upper bounds on collision probabilities,
our results establish that for EA-equivalence, two independently sampled functions are equivalent
with super-exponentially small probability. This validates random sampling strategies for
searching cryptographic primitives such as APN or AB functions: exhaustive random searches
produce EA-inequivalent candidates with overwhelming probability, eliminating concerns about
redundancy modulo EA-equivalence.
\end{enumerate}

While we have proved upper bounds on collision probabilities for CCZ-equivalence, the question
of whether most functions have trivial CCZ-stabilizers remains open. Establishing analogous results for CCZ-equivalence would require new techniques, as the
graph-based action is more complex than the EA group action.

\section*{Acknowledgements}

This work was supported in part by JSPS KAKENHI Grant Number JP25K17290.

\bibliographystyle{plain}
\bibliography{references}

\end{document}